\newtheorem{theorem}{Theorem}
\newtheorem{lemma}[theorem]{Lemma}
\newtheorem*{question*}{Question}
\theoremstyle{definition}
\newtheorem*{definition*}{Definition}
\theoremstyle{remark}
\newcommand{\abs}[1]{\left\lvert#1\right\rvert}
\newcommand{\norm}[1]{\left\lVert#1\right\rVert}
\newcommand{\ceil}[1]{\left\lceil #1 \right\rceil}
\newcommand{\RR}{\mathbb{R}}
\newcommand{\NN}{\mathbb{N}}
\newcommand{\cH}{\mathcal{H}}
\newcommand{\cM}{\mathcal{M}}
\newcommand{\cW}{\mathcal{W}}
\newcommand{\x}{\times}
\newcommand{\inj}{\mathrm{inj}}
\newcommand{\simple}{\mathrm{simple}}
\DeclareMathOperator{\Adm}{Adm}
\author{L\'aszl\'o Mikl\'os Lov\'asz}
\address{Department of Mathematics\\ MIT\\ Cambridge, MA 02139, United States}
\email{lmlovasz@math.mit.edu}
\author{Yufei Zhao}
\address{Mathematical Institute, University of Oxford, Oxford OX2 6GG, United Kingdom}
\email{yufei.zhao@maths.ox.ac.uk}
\thanks{Y.~Zhao was supported by a Microsoft Research PhD Fellowship.}
\title{On derivatives of graphon parameters}
\begin{document}

\begin{abstract}
We give a short elementary proof of the main theorem in the paper
``Differential calculus on graphon space'' by Diao et al.~(2015)~\cite{DGKR}, which says
that any graphon parameters whose $(N+1)$-th derivatives all vanish
must be a linear combination of homomorphism densities $t(H,
-)$ over graphs $H$ on at most $N$ edges.
\end{abstract}

\maketitle

Let $\cW \subset L^\infty([0,1]^2,\RR)$ denote the set of bounded symmetric measurable functions $f \colon [0,1]^2 \to \RR$ (here symmetric means $f(x,y) = f(y,x)$ for all $x,y$). Let $\cW_{[0,1]} \subset \cW$ denote those functions in $\cW$ taking values in $[0,1]$. Such functions, known as \emph{graphons}, are central to the theory of graph limits \cite{Lov}, an exciting and active research area giving an analytic perspective towards graph theory.

In \cite{DGKR}, the authors systematically study the local structure of differentiable graphon parameters. They develop the theory of consistency constraints for multilinear functionals on graphon space, and as a consequence, obtain the result (Theorem~\ref{thm:DGKR} below) that is the graphon analog of the following basic fact from calculus: the set of functions whose $(N+1)$-th derivatives all vanish identically is precisely the set of polynomials of degree at most $N$. For graphons, homomorphism densities $t(H, -)$ play the role of monomials: they generate a ring of smooth functions that separate points and they have the property of vanishing higher derivatives as in Theorem~\ref{thm:DGKR}. In this short note, we follow a more direct route to prove their result. Our proof avoids the technicalities of the approach in \cite{DGKR}.

We begin with some definitions. The space $\cW$ is equipped with the \emph{cut norm}
\[
\norm{f}_\square := \sup_{\text{measurable }S, T \subseteq [0,1]} \abs{
  \int_{S \x T} f(x,y) \, dxdy }.
\]
Given $g \in \cW$, and a measure-preserving map $\phi \colon [0,1] \to [0,1]$, we define $g^\phi(x,y) := g(\phi(x), \phi(y))$.
The \emph{cut distance} on $\cW$ is defined by $\delta_\square(f,g) := \inf_{\phi} \norm{f - g^\phi}_\square$
where $\phi$ ranges over all such measure-preserving maps. Let $\sim$
denote the equivalence relations in $\cW$ defined by
$f \sim g \Leftrightarrow \delta_\square(f,g) = 0$. It is known that
$(\cW_{[0,1]}/\sim, \delta_\square)$ is a compact metric space~\cite{LS07}.

Functions $F \colon \cW_{[0,1]} / \sim \to \RR$ are called \emph{class
  functions} (we import this terminology from \cite{DGKR}; the term
\emph{graphon parameter} is also used in the literature). Class functions that are
continuous with respect to the cut distance play an important role in
graph parameter/property testing~\cite{BCLSV1,LS10}.

Define the
\emph{admissible directions at $f \in \cW_{[0,1]}$} as
\[
\Adm(f) := \{ g \in \cW : f + \epsilon g \in \cW_{[0,1]} \text{ for
  some $\epsilon > 0$} \}.
\]
The \emph{G\^ateaux derivative of $F$ at $f \in \cW_{[0,1]}$ in the
direction $g \in \Adm(f)$} is defined by (if it exists)
\[
dF(f;g) := \lim_{\lambda \to 0^+} \frac{1}{\lambda} (F(f + \lambda
g) - F(f)).
\]
\emph{Higher mixed G\^ateaux derivatives} are defined iteratively: $d^{N+1}F(f;
g_1, \dots, g_{N+1})$ is defined to be the G\^ateaux derivative of
$d^N F(-; g_1, \dots, g_N)$ at $f$ in the direction $g_{N+1}$, if this limit exists.

Let $\cH_n$ denote the isomorphism classes of multi-graphs with $n$
edges, no isolated vertices, and no self-loops but possible
multi-edges. Also let $\cH_{\le n} := \bigcup_{j \le n} \cH_j$ and
$\cH := \bigcup_{j \in \NN} \cH_j$.

For any $H \in \cH$, and any $f \in \cW$, we define the homomorphism density
\[
t(H, f) := \int_{[0,1]^{V(H)}} \prod_{ij \in E(H)} f(x_i, x_j)
\prod_{i \in V(H)} dx_i,
\]
where $E(H)$ is the multi-set of edges of $H$. For example, when $H$ consists of two vertices
and two parallel edges between them, $t(H, W) = \int_{[0,1]^2} W(x,y)^2 \,
dxdy$.

Here is the main result of \cite{DGKR}.

\begin{theorem}[Diao, Guillot, Khare, Rajaratnam~{\cite[Theorem 1.4]{DGKR}}] \label{thm:DGKR}
  Let $F \colon \cW_{[0,1]} \to \RR$ be a class function which is
  continuous with respect to the $L^1$ norm and $N+1$ times
  G\^ateaux differentiable for some $N \ge 0$. Then $F$ satisfies
  \[
  d^{N+1} F(f; g_1, \dots, g_{N+1}) = 0, \qquad
  \forall f \in \cW_{[0,1]}, \ g_1, \dots, g_{N+1} \in \Adm(f),
  \]
  if and only if there exist constants $c_H$ such that
  \begin{equation} \label{eq:F=sum-t}
  F(f) = \sum_{H \in \cH_{\le N}} c_H t(H, f).
  \end{equation}
  Moreover, the constants $c_H$ are unique. If in addition $F$ is
  continuous with respect to the cut norm, then $c_H = 0$ if $H \in
  \cH_{\le N}$ is not a simple graph.
\end{theorem}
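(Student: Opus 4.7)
The ``if'' direction is straightforward: expanding $t(H,f+\lambda g)$ by multilinearity of the integrand gives a polynomial in $\lambda$ of degree $|E(H)|$, so $d^{|E(H)|+1}t(H,\cdot)\equiv 0$; linearity then gives $d^{N+1}\bigl(\sum_{H\in\cH_{\le N}}c_H t(H,\cdot)\bigr)\equiv 0$.

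For the converse, my core observation is that $d^{N+1}F\equiv 0$ forces $F$ to be a polynomial of total degree $\le N$ on every finite-dimensional convex slice of $\cW_{[0,1]}$: given $f_0,\dots,f_m\in\cW_{[0,1]}$, the directions $g_i:=f_i-f_0$ lie in $\Adm(f_0)$, and the one-variable fact ``vanishing $(N+1)$-th derivative implies polynomial of degree $\le N$'' applied iteratively in each coordinate shows that $(\lambda_1,\dots,\lambda_m)\mapsto F(f_0+\sum_i\lambda_i g_i)$ is a polynomial of total degree $\le N$ on the simplex $\{\lambda_i\ge 0,\ \sum\lambda_i\le 1\}$.

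Specializing, I parametrize $k$-step functions on the uniform partition of $[0,1]$ by symmetric $A\in[0,1]^{k\times k}$, so that $P_k(A):=F(W_A)$ is a polynomial in the $\binom{k+1}{2}$ entries of $A$ of total degree $\le N$. The class function property makes $P_k$ invariant under simultaneous row/column permutations ($S_k$-invariance) and under blow-ups $A\mapsto A\otimes J_m$. Every $S_k$-invariant polynomial of degree $\le N$ is a linear combination of orbit sums $p_H(A):=\sum_{\phi:V(H)\to[k]}\prod_{ij\in E(H)}A_{\phi(i)\phi(j)}$ indexed by multi-graphs $H$ on $\le N$ edges, \emph{a priori} possibly with self-loops, and $p_H(A)=k^{v(H)}t(H,W_A)$ whenever $H\in\cH_{\le N}$. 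The plan is to argue that self-loop orbit sums must appear with coefficients vanishing as $k\to\infty$ (because diagonal perturbations of $A$ change $W_A$ by only $O(1/k)$ in $L^1$, so $L^1$-continuity of $F$ forbids persistent self-loop coefficients in the large-$k$ limit), and then to use blow-up invariance together with linear independence of $\{t(H,\cdot)\}_{H\in\cH_{\le N}}$ on $k$-step functions (for $k$ large) to pin down coefficients $c_H$ not depending on $k$.

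Once $F(W_A)=\sum_{H\in\cH_{\le N}}c_H t(H,W_A)$ holds on step functions, it extends to all of $\cW_{[0,1]}$ by $L^1$-density of step functions and $L^1$-continuity of both sides; uniqueness of $c_H$ follows from linear independence of $\{t(H,\cdot)\}_{H\in\cH}$ as class functions. For the cut-norm refinement, take a sequence $W_n$ of random $0/1$-valued $n$-step functions with edge density $1/2$: $\delta_\square(W_n,1/2)\to 0$, so cut-norm continuity of $F$ yields $F(W_n)\to F(1/2)$; but for multi-$H$ the factors $A_{ij}^m=A_{ij}$ in the $0/1$ case give $\lim_n t(H,W_n)=t(H^{\simple},1/2)\ne t(H,1/2)$, and uniqueness of the $c_H$ then forces $c_H=0$ for non-simple $H$. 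The main obstacle I anticipate is the self-loop / $k$-stabilization step: cleanly eliminating the self-loop contributions in the limit and showing the surviving coefficients are $k$-independent. An attractive alternative I would consider pursuing is to extract each $c_H$ inductively on $|E(H)|$ from high-order Gâteaux derivatives of $F$ at the zero graphon (where the $N$-th derivative isolates the $|E(H)|=N$ contribution), bypassing step functions altogether.
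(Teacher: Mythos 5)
Your overall strategy is the same as the paper's: restrict $F$ to step functions on the uniform $k$-part partition, use the vanishing of the $(N+1)$-st derivatives to get a polynomial of degree $\le N$ in the matrix entries, decompose the resulting $S_k$-invariant polynomial into homomorphism densities, show the coefficients are consistent across $k$ (your blow-up invariance is exactly the paper's observation that the step functions coming from $k$ parts are also step functions with $mk$ parts), and finish by $L^1$-density and $L^1$-continuity. However, the obstacle you flag as the main open issue --- eliminating the self-loop orbit sums --- is self-inflicted, and your proposed fix does not work as stated: $L^1$-continuity of $F$ is not uniform on $\cW_{[0,1]}$, and you have no control over how the loop coefficients of $P_k$ vary with $k$, so ``diagonal perturbations are $O(1/k)$ in $L^1$'' does not let you discard those terms in the limit. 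The clean resolution is to parametrize the slice by symmetric matrices with \emph{zero diagonal} (loopless edge-weighted graphs), so $P_k$ is a polynomial in the $\binom{k}{2}$ off-diagonal entries only; nothing is lost because zeroing the diagonal blocks changes a $k$-step function by at most $1/k$ in $L^1$, so these step functions remain $L^1$-dense. With zero diagonal, the $S_k$-orbit sums of monomials are precisely the injective densities $t^{\inj}(H,-)$ for loopless multigraphs $H$; distinct $H$ use disjoint sets of monomials (which also supplies the linear independence you invoke, for $k \ge 2N$), and the change of basis between $t^{\inj}$ and $t$ is unitriangular because identifying two adjacent vertices produces a factor $a_{vv}=0$.

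Your argument for the final claim is also insufficient as written: testing only against the constant graphon $1/2$ yields the single scalar identity $\sum_H c_H t(H,1/2)=\sum_H c_H t(H^{\simple},1/2)$, which cannot force any individual $c_H$ to vanish. You need the identity $F(W)=\sum_H c_H t(H^{\simple},W)$ for \emph{every} $W\in\cW_{[0,1]}$: it holds for all $\{0,1\}$-valued $W$ (where $t(H,W)=t(H^{\simple},W)$), those are dense in cut distance, and both sides are cut-continuous ($t(H^{\simple},-)$ by the counting lemma, since $H^{\simple}$ is simple); then uniqueness of the coefficients, applied to the two representations of $F$, gives $c_H=0$ for non-simple $H$. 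Your alternative of reading off $c_H$ from derivatives at the zero graphon is left undeveloped and faces the additional wrinkle that $\Adm(0)$ contains only nonnegative directions, so I would not rely on it.
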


The ``if'' direction is simple. From the definition, we can see that $t(H, f +
\lambda_1g_1 + \dots + \lambda_{N+1} g_{N+1})$ expands into a polynomial
in $\lambda_1, \dots, \lambda_{N+1}$ of total degree at most $|E(H)| \le N$,
which clearly implies that its derivative with respect to $d\lambda_1 d\lambda_2 \dots
d\lambda_{N+1}$ vanishes identically. Thus any $F$ of the form
\eqref{eq:F=sum-t} satisfies $d^{N+1}F
\equiv 0$ (and is $L^1$-continuous).

For the ``only if'' direction, we first give a sketch. When the domain
of $F$ is restricted to graphons that correspond to edge-weighted graphs on $n$ vertices, $F$ is simply a
function on $\binom{n}{2}$ real variables. So the vanishing of its $(N+1)$-th order
derivatives implies that it is a polynomial of degree at
most $N$. From these polynomials we can recover the coefficients of
$t(H, -)$. Weighted graphs on finitely many vertices correspond
to graphons that are step functions, and they are dense in
$\cW_{[0,1]}$ with respect to the $L^1$ norm, so the claim follows by continuity.

Now come the details. Let $\cM_n$ denote the set of symmetric $n
\x n$ matrices $a = (a_{i,j})$ with zeros on the diagonal ($a_{i,i} = 0$), and let $\cM_{n,[0,1]}
\subset \cM_n$ be the matrices with entries in $[0,1]$. We view
elements of $\cM_n$ as edge-weighted complete graphs on $n$ labeled
vertices. For $a, b \in \cM_n$, we write $a \sim b$ if $a$ can be
obtained from $b$ by a permutation of the vertex labels. We define
class functions and G\^ateaux derivatives for $\cM_n$ analogously to
how they are defined for $\cW$.
Write $[n]:=\{1,\dots,n\}$.
For any $a \in \cM_n$ and $H \in \cH$ (assume that $V(H) = \{1, \dots,
|V(H)|\}$), define
\begin{equation} \label{eq:mat-t}
t(H, a) = \frac{1}{n^{|V(H)|}} \sum_{v_1, \dots, v_{|V(H)|} \in [n]}
\prod_{ij \in E(H)} a_{v_i, v_j}.
\end{equation}
There is a natural embedding
$\cM_n \hookrightarrow \cW$, identifying $a \in \cM_n$ with $f_a \in
\cW$ given by $f_a(x,y) = a_{\ceil{nx}, \ceil{ny}}$ (and $f_a(x,y) = 0$
if $x$ or $y$ is $0$). All previous notions are consistent with the
identification.

Note that $t(H, a)$ is a degree $|E(H)|$ polynomial in $a_{i,j}$, $1 \le i
< j \le n$ (recall that $a$ was symmetric, so $a_{i,j} = a_{j,i}$). Write $(n)_k := n(n-1)\cdots (n-k+1)$ and define
\begin{equation} \label{eq:mat-t-inj}
t^{\inj}(H, a) = \frac{1}{(n)_{|V(H)|}} \sum_{\text{distinct } v_1, \dots, v_{|V(H)|} \in [n]}
\prod_{ij \in E(H)} a_{v_i, v_j}.
\end{equation}
For each fixed $H$ and $n \ge |V(H)|$, $t(H, a)$ equals a nonzero multiple of $t^\inj(H,a)$ plus a linear combination
of various $t^\inj(H', a)$ with $|E(H')| = |E(H)|$ and $|V(H')| <
|V(H)|$ (essentially recording the different ways that $v_1, \dots,
v_{|V(H)|}$ can fail to be distinct in the summation for $t(H,a)$). It
follows that $(t^{\inj}(H, -) : H \in \cH_N)$ can be transformed into $(t(H, -): H \in
\cH_N)$ via a lower triangular matrix with positive diagonal entries (when $\cH_N$ is sorted by the
number of vertices), and vice versa (since such matrices are invertible).

Let $\cH_d^{(n)}$ consist of those $H \in \cH_d$ with at most $n$ vertices. The main observation we need to make is the following lemma:
\begin{lemma} \label{lem:poly=>t-inj}
If a class function $F \colon \cM_{n,[0,1]} \to \RR$ is a homogeneous polynomial
of degree $d$, then we can write $F = \sum_{H \in \cH^{(n)}_d} c_H t^\inj(H, -)$ for
some $c_H \in \RR$, in a unique way.
\end{lemma}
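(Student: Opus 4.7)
The plan is to identify a class function on $\cM_{n,[0,1]}$ with an $S_n$-invariant function, and then prove the lemma by a basis/dimension count in the space of $S_n$-invariant homogeneous polynomials of degree $d$ on the $\binom{n}{2}$ variables $a_{i,j}$ ($1\le i<j\le n$).

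First I would observe that any monomial of degree $d$ in the variables $a_{i,j}$ corresponds to a labeled multi-graph on vertex set $[n]$ with exactly $d$ edges: the exponent of $a_{i,j}$ is the multiplicity of the edge $\{i,j\}$. The $S_n$-action on $\cM_n$ permutes coordinates, hence permutes these monomials by relabeling the vertices of the associated multi-graph. Two monomials therefore lie in the same $S_n$-orbit if and only if their labeled multi-graphs are isomorphic as abstract multi-graphs; after discarding isolated vertices we obtain precisely one element of $\cH_d^{(n)}$. Consequently the $S_n$-orbits of degree-$d$ monomials are in canonical bijection with $\cH_d^{(n)}$, and the space of $S_n$-invariant homogeneous polynomials of degree $d$ on $\cM_n$ has dimension exactly $|\cH_d^{(n)}|$, with a natural basis given by the orbit sums $\cO_H := \sum_{m \in \text{orbit of } H} m$.

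Next I would inspect $t^\inj(H,a)$ for $H\in\cH_d^{(n)}$. Expanding \eqref{eq:mat-t-inj}, every term $\prod_{ij\in E(H)} a_{v_i,v_j}$ coming from an injective $v\colon V(H)\hookrightarrow[n]$ is a monomial whose associated multi-graph lies in the $S_n$-orbit corresponding to $H$, and each such monomial is hit exactly $|\mathrm{Aut}(H)|$ times (one per automorphism of $H$). Hence $t^\inj(H,-) = \frac{|\mathrm{Aut}(H)|}{n^{|V(H)|}}\,\cO_H$, a nonzero scalar multiple of the basis element $\cO_H$. Since distinct $H\in\cH_d^{(n)}$ yield $t^\inj$'s supported on disjoint monomial orbits, they are linearly independent, and there are $|\cH_d^{(n)}|$ of them, so they form a basis of the space of $S_n$-invariant homogeneous polynomials of degree $d$. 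This gives both existence and uniqueness.

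The only mildly delicate point, and the one I would be most careful about, is the bookkeeping in the bijection between $S_n$-orbits of degree-$d$ monomials and $\cH_d^{(n)}$: one must check that passing between labeled multi-graphs on $[n]$ (possibly with isolated vertices) and unlabeled multi-graphs on $\le n$ vertices (with no isolated vertices, as in $\cH$) really yields a bijection with orbits, and that the scalar $|\mathrm{Aut}(H)|/n^{|V(H)|}$ in the $t^\inj$ expression is indeed nonzero (which holds precisely because $|V(H)|\le n$, i.e., $H\in\cH_d^{(n)}$).
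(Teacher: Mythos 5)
Your proposal is correct and follows essentially the same route as the paper: both identify class functions with $S_n$-invariant polynomials, match monomial orbits with elements of $\cH_d^{(n)}$, and use the disjointness of the monomial supports of the $t^\inj(H,-)$ to get existence and uniqueness. You merely make explicit the constant $|\mathrm{Aut}(H)|/n^{|V(H)|}$ that the paper leaves as "a multiple."
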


\begin{proof}
  Since $F$ is a class function, the coefficient of the monomial
  $a_{i_1,j_1}\dots a_{i_d,j_d}$ is equal to the coefficient of
  $a_{\sigma(i_1),\sigma (j_1)}\dots a_{\sigma(i_d),\sigma(j_d)}$ for
  all permutations $\sigma$ of $[n]$. Observe that the
  polynomial $\sum_{\sigma \in S_n} a_{\sigma(i_1),\sigma (j_1)}\dots
  a_{\sigma(i_d),\sigma(j_d)}$ is a multiple of $t^\inj(H, a)$ for
  the multigraph $H$ whose multi-set of edges is given by $E(H) = \{i_1j_1, \dots, i_dj_d\}$. For
distinct $H$ and $H'$, the set of monomials that appear in $t^\inj(H, a)$ and
$t^\inj(H', a)$ are disjoint.  Thus, we have a direct correspondence between linear combinations of $t^{\inj}(H,-)$ for $H \in \cH_d^{(n)}$ and polynomials of degree $d$.
\end{proof}

In particular, this lemma implies the following:

\begin{lemma} \label{lem:indep}
  The elements of $\{t(H, -) : H \in \cH_{\le N}\}$ are linearly independent
  as functions on $\cM_{n,[0,1]}$ whenever $n \ge 2N$.
\end{lemma}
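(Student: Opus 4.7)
The plan is to deduce this from Lemma \ref{lem:poly=>t-inj} together with the invertible transformation between the $t(H,-)$ and $t^\inj(H,-)$ bases recorded just before that lemma. The key point that controls why $n \ge 2N$ suffices is that a multigraph $H \in \cH_{\le N}$ has no isolated vertices, so $|V(H)| \le 2|E(H)| \le 2N \le n$; in particular every such $H$ lies in $\cH_{|E(H)|}^{(n)}$, and each $t^\inj(H,-)$ is a nonzero polynomial on $\cM_n$. Using the triangular change-of-basis between $(t(H,-))_{H \in \cH_{\le N}}$ and $(t^\inj(H,-))_{H \in \cH_{\le N}}$ from the paragraph preceding Lemma \ref{lem:poly=>t-inj}, it will be enough to show that $\{t^\inj(H,-) : H \in \cH_{\le N}\}$ is linearly independent as a family of functions on $\cM_{n,[0,1]}$.

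Next, I would pass from linear independence of functions on $\cM_{n,[0,1]} = [0,1]^{\binom{n}{2}}$ to linear independence of polynomials in the $\binom{n}{2}$ variables $a_{i,j}$ ($i<j$), which is valid since $\cM_{n,[0,1]}$ has nonempty interior in $\cM_n$, so any polynomial vanishing on it vanishes identically. Since $t^\inj(H,-)$ is homogeneous of degree $|E(H)|$, I can split by degree and reduce to showing, for each fixed $d \le N$, that $(t^\inj(H,-))_{H \in \cH_d^{(n)}}$ is linearly independent as a family of polynomials.

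Finally, this last step is exactly what is recorded inside the proof of Lemma \ref{lem:poly=>t-inj}: for distinct $H, H' \in \cH_d^{(n)}$, the monomials appearing in $t^\inj(H, a)$ and $t^\inj(H', a)$ are disjoint (one can read off the multiset of edges of $H$ from any single monomial in its expansion), so the family has disjoint monomial support and is trivially linearly independent. There is no real obstacle here; the only thing that needs care is verifying the threshold $n \ge 2N$, which is precisely what guarantees that every $H \in \cH_{\le N}$ can be realized on $n$ labeled vertices and hence contributes a nonzero $t^\inj(H,-)$.
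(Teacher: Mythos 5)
Your proposal is correct and follows essentially the same route as the paper: observe that $n \ge 2N$ guarantees every $H \in \cH_{\le N}$ fits on $n$ vertices, deduce linear independence of the $t^\inj(H,-)$ from the disjointness of their monomial supports (the content of Lemma~\ref{lem:poly=>t-inj}'s proof), and transfer this to the $t(H,-)$ via the triangular change of basis. You simply make explicit two details the paper leaves implicit, namely that $\cM_{n,[0,1]}$ has nonempty interior so polynomial identities on it hold identically, and that disjoint monomial support gives independence.
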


\begin{proof}
If $n \ge 2N$, then any graph $H$ with at most $N$ edges and no isolated vertices has at most $2N$ vertices. Thus the polynomials $\{t^\inj(H, -)$, $H \in \cH_{\le N}\}$ are
linearly independent. By the linear
  relations between $\{t(H,-)\}$ and $\{t^\inj(H,-)\}$, it
  follows that $\{t(H,-): H \in \cH_{\le N}\}$ is linearly independent as well.
\end{proof}

\begin{lemma}
  \label{lem:Mn}
  If $F \colon \cM_{n,[0,1]} \to \RR$ is a class
function whose $(N+1)$-th derivatives vanish everywhere, then $F =
\sum_{H \in \cH_{\le N}} c_H t(H, -)$ for some $c_H \in \RR$. If $n \ge 2N$, the values $c_H$ are uniquely determined.
\end{lemma}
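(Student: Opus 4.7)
The approach is to reduce everything to finite-dimensional polynomial algebra. Via the identification of $\cM_{n,[0,1]}$ with the cube $[0,1]^{\binom{n}{2}}$ through the independent entries $a_{ij}$ with $i<j$, the function $F$ becomes an ordinary real-valued function on this cube, and the Gâteaux derivatives coincide with classical iterated directional derivatives.

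The first step is to argue that $F$ is a polynomial of degree at most $N$ in the variables $a_{ij}$. On any line segment lying in the interior of $\cM_{n,[0,1]}$, $F$ restricts to a single-variable function whose $(N+1)$-th derivative vanishes identically, and so is a polynomial of degree at most $N$. A function on a convex open set in $\RR^m$ that is polynomial of degree at most $N$ along every line is itself a polynomial of degree at most $N$; this is a standard fact, provable by induction on the ambient dimension by reading off the interpolating polynomial one coordinate at a time. For boundary points of $\cM_{n,[0,1]}$, admissible directions point into the interior, so $F$ is polynomial of degree at most $N$ along any admissible segment emanating from the boundary; hence the polynomial produced on the interior extends by continuity and agrees with $F$ at boundary points as well.

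Once $F$ is known to be a polynomial of degree at most $N$, decompose it into its homogeneous components, $F = F_0 + F_1 + \dots + F_N$. Because $F$ is invariant under the $S_n$-action by permutation of vertex labels and this action preserves the degree grading, each $F_d$ is again a class function. Applying Lemma \ref{lem:poly=>t-inj} writes each $F_d$ uniquely as a linear combination of $t^\inj(H, -)$ with $H \in \cH_d^{(n)}$, and the lower-triangular change of basis between $\{t^\inj(H,-)\}$ and $\{t(H,-)\}$ recorded just before Lemma \ref{lem:indep} rewrites this as a linear combination of $t(H, -)$ with $H \in \cH_d \subseteq \cH_{\le N}$. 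Summing over $d$ yields the desired representation $F = \sum_{H \in \cH_{\le N}} c_H t(H, -)$. Uniqueness in the range $n \ge 2N$ is then immediate from the linear independence asserted in Lemma \ref{lem:indep}.

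The main obstacle is the first step, namely converting the pointwise vanishing of higher Gâteaux derivatives into the global statement that $F$ is a polynomial, since the hypothesis only supplies derivatives in admissible directions and does not come with any standalone continuity assumption on $F$. This is resolved by the fact that the interior of $\cM_{n,[0,1]}$ contains admissible segments in every direction, so the line-restriction argument there goes through unimpeded, and the boundary behavior is then pinned down by admissible segments running from boundary into interior together with Gâteaux differentiability of $F$ in those directions.
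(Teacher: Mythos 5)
Your proof is correct and follows essentially the same route as the paper: identify $F$ with a function of $\binom{n}{2}$ real variables, show that the vanishing of the $(N+1)$-th derivatives forces it to be a polynomial of degree at most $N$, and then apply Lemma~\ref{lem:poly=>t-inj} together with the triangular change of basis between $t^{\inj}$ and $t$ and the linear independence in Lemma~\ref{lem:indep}. The only difference is that you carefully justify (via line restrictions, interpolation, and the boundary discussion) the step where vanishing G\^ateaux derivatives imply $F$ is a polynomial, which the paper asserts in one sentence.
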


\begin{proof} Note that $\cM_{n,[0,1]}$ is a subset of a finite dimensional vector space, which means $F$ is a function of $\binom{n}{2}$ real variables, and its G\^ateaux derivatives are just the usual partial derivatives.
So if the $(N+1)$-th derivatives of $F$ all vanish, then $F$ must
be a polynomial of degree at most $N$. By Lemma~\ref{lem:poly=>t-inj}, $F$ lies in the span of
$t^\inj(H, -)$, $H \in \cH_{\le N}$, and hence it lies in the span of
$t(H, -)$, $H \in \cH_{\le N}$. By Lemma \ref{lem:indep}, if $n \ge 2N$, the functions $t(H,-)$ are linearly independent, so the values $c_H$ are unique.
\end{proof}

Now we prove the ``only if'' direction of Theorem~\ref{thm:DGKR}. By
embedding $\cM_n \hookrightarrow \cW$, the hypothesis $d^{N+1}F \equiv
0$ on $\cM_{n,[0,1]}$ implies, by Lemma~\ref{lem:Mn},
that $F = \sum_{H \in \cH_{\le N}} c_{H}^{(n)} t(H, -)$ on
$\cM_{n,[0,1]}$ for some $c_H^{(n)}$, uniquely if $n \ge 2N$. For any $m, n \ge 2N$ with $m/n \in \NN$,
the image of $\cM_n$ in $\cW$ is contained in the image of $\cM_m$. Since
$F = \sum_{H \in \cH_{\le N}} c_{H}^{(m)} t(H, -)$ on $\cM_m$,
restricting to $\cM_n$, we see that
$c_H^{(n)} = c_H^{(m)}$ for all $H \in \cH_{\le N}$. It then follows that for any $n,n'
\ge 2N$, $c_H^{(n)} = c_H^{(nn')} = c_H^{(n')}$, so there is some
$c_H$ so that $c_H^{(n)} = c_H$ for all $n \ge 2N$.

It follows that $F = \sum_{H \in \cH_{\le N}} c_H t(H, -)$ on
$\bigcup_{n \in \NN} \cM_{n,[0,1]}$, whose image is dense in $\cW_{[0,1]}$
with respect to the $L^1$ norm. As both sides of the equation are continuous with
respect to the $L^1$ norm, the equality holds in all of $\cW_{[0,1]}$. The uniqueness of the constants $c_H$ follows from Lemma~\ref{lem:Mn}.

The proof of the final claim in Theorem~\ref{thm:DGKR} is reproduced here from \cite{DGKR} for completeness. Suppose
$F$ is continuous with respect to the cut norm. Then
\begin{equation} \label{eq:simplify}
F(f) = \sum_{H \in \cH_{\le N}} c_Ht(H^{\simple}, f)
\end{equation}
where $H^\simple$ is the simple graph obtained from $H$ by replacing any
multi-edge by a single edge between the same pair of vertices. Indeed,
\eqref{eq:simplify}
holds for $\{0,1\}$-valued $f$ since $t(H^{\simple}, f) = t(H, f)$
for all $\{0,1\}$-valued $f$. Since the set of $\{0,1\}$-valued graphons is dense in
$\cW_{[0,1]}$ with respect to cut distance, and both sides of
\eqref{eq:simplify} are continuous in $f$ with respect to cut distance,
\eqref{eq:simplify} holds on all of $\cW_{[0,1]}$. Thus only simple graphs
are needed in the summation for $F$.

\end{document}